\documentclass[11pt,a4paper]{article}
 
\setlength{\textheight}{51\baselineskip} 
\setlength{\textwidth}{15cm} 
\setlength{\voffset}{-3\baselineskip}
 
\setlength{\oddsidemargin}{21pt}
\setlength{\evensidemargin}{21pt}

\usepackage[utf8]{inputenc}
 
\usepackage{array,amsbsy,amscd,amsfonts,color,amssymb,amstext,amsmath,latexsym,amsthm,amscd
,multicol,graphicx}
\usepackage{hyperref}
 
 
 
 
\newtheorem{theorem}{Theorem}

\newtheorem{lemma}[theorem]{Lemma}

\theoremstyle{definition}
\theoremstyle{definition}
\theoremstyle{definition}
\theoremstyle{definition}
\theoremstyle{definition}
\theoremstyle{definition}
\theoremstyle{definition}
 
%

\def\proofof [#1] {\noindent {\bf Proof of #1. } }

\def\al #1.{{\mathcal{#1}}}

\newcommand{\KK}{\mathfrak{K}_\A}

\newcommand{\A}{\mathcal{A}}
\newcommand{\Ring}{\mathcal{R_A}}
\newcommand{\RRing}{\mathcal{\tilde{R}_A}}

\newcommand{\K}{\mathcal{K}}
\renewcommand{\H}{\mathcal{H}}
\newcommand{\M}{\mathcal{M}}

\newcommand{\C}{\mathbb{C}}

\renewcommand{\S}{S^1}

\newcommand{\Z}{\mathbb{Z}}

\newcommand{\E}{\mathcal{E}}
\newcommand{\unit}{\mathbf{1}}

\newcommand{\bp}{\begin{proof}}
\newcommand{\ep}{\end{proof}}
\newcommand{\bdp}{\begin{dproof}}
\newcommand{\edp}{\end{dproof}}
\newcommand{\ra}{\rightarrow}

\newcommand{\Mat}{\operatorname{M}}

\newcommand{\locn}{\operatorname{ln}}

\newcommand{\End}{\operatorname{End}}

\newcommand{\id}{\operatorname{id}}

\newcommand{\ie}{{i.e.,\/}\ }

\newcommand{\eg}{{e.g.\/}\ }
\newcommand{\cf}{{cf.\/}\ }

\title{\huge Conformal Nets and KK-Theory}

\author{
\phantom{X}\\
{\sc Sebastiano Carpi}$^{1}$\footnote{Supported in part by the ERC
Advanced Grant 227458 "Operator Algebras and Conformal Field Theory"},
{\sc Roberto Conti}$^2$,
{\sc Robin Hillier}$^{3*}$\\
\phantom{X}\\
${}^1$ Dipartimento di Economia,
Universit\`a di Chieti-Pescara ``G. d'Annunzio''\\
Viale Pindaro, 42, I-65127 Pescara, Italy\\
E-mail: {\tt s.carpi@unich.it}\\
\phantom{X}\\
${}^2$
SBAI, Sezione di Matematica,
Universit\`a Sapienza di Roma\\
Via A. Scarpa, 1, I-00161 Roma, Italy\\
E-mail: {\tt roberto.conti@sbai.uniroma1.it}\\
\phantom{X}\\
${}^3$
Dipartimento di Matematica,
Universit\`a di Roma ``Tor Vergata''\\
Via della Ricerca Scientifica, 1, I-00133 Roma, Italy\\
E-mail: {\tt hillier@mat.uniroma2.it}
}
 
\date{}

\begin{document}
\maketitle

\begin{abstract} 
Given a completely rational conformal net $\A$ on $S^1$, its fusion ring acts faithfully on the K-group $K_0(\KK)$ of a certain universal C*-algebra $\KK$ associated to $\A$, as shown in a previous paper. We prove here that this action can actually be identified with a Kasparov product, thus paving the way for a fruitful interplay between conformal field theory and KK-theory.
\end{abstract}

\section{Introduction}

In the operator algebraic approach a chiral conformal fied theory is described by a conformal net $\A$ on $\S$, namely an inclusion preserving map $I \mapsto \A(I)$ from the set of proper (nonempty, nondense, open) intervals of the unit circle $\S$ into the family of von Neumann algebras (actually type III$_1$ factors) acting on a fixed separable (complex) Hilbert space 
$\H$ called the {\it vacuum Hilbert space} of the theory, which is conformally covariant and satisfies some other natural conditions, see e.g. \cite{KL04} and the references therein. The superselection structure of the theory is then captured by the representation theory of the net $\A$ and can also be described in terms of localized (DHR) endomorphisms.

In a recent paper \cite{CCHW12} together with Mihály Weiner we have started an investigation of K-theoretical 
aspects of conformal nets. In the present note we take one further step 
along these lines
and provide an interpretation of some of the results obtained 
there in the framework of Kasparov KK-theory. 
At a first glance, this might appear merely as an academic curiosity. However, in view of the striking success of KK-theory on the one hand (\eg \cite{CMR} for an overview) and the
noncommutative geometrization program for (super-) conformal nets and their representations \cite{Lo01,CKL08,CHKL10} on the other hand, we feel that there are good reasons to give a closer look at 
this subject, as it could possibly reveal a lot of potential 
for further investigations.

We recall the results in \cite{CCHW12} relevant for the present analysis,
referring the reader to that paper for more details, notation, and references.
Let $\A$ be a completely rational local conformal net on $S^1$. We can then define the so-called locally normal universal C*-algebra $C^*_{\locn}(\A)$ which is canonically associated to the superselection theory of the net. It may be expressed as a quotient of the universal algebra $C^*(\A)$  previously defined by Fredenhagen in \cite{Fre90} or, equivalently, as the image of $C^*(\A)$ in the so-called locally normal universal representation $(\pi_{\locn},\H_{\locn})$. The striking fact is that $C^*_{\locn}(\A)$ turns out to be isomorphic to the direct sum $B(\H)^{\oplus n}$ of $n$ copies of the algebra $B(\H)$ of all bounded operators on the vacuum Hilbert space of $\A$. Here, $n < \infty$ is the number of sectors of $\A$. Consequently, it has trivial K-theory and several other properties which make it not really attractive as a C*-algebra. 

We therefore consider its separable C*-subalgebra $\KK$ generated by 
the finite projections. 
It is isomorphic to $\K^{\oplus n}\subset B(\H)^{\oplus n}$, where 
$\K := K(\H)$ is the algebra of compact operators on $\H$. 
Clearly $\KK$ is an ideal of $C^*_{\locn}(\A)$ and in fact it is the 
largest among the norm-separable ideals. 
We have
\[
K_0(\KK)=\Z^n,\quad K_1(\KK)=0.
\] 
Furthermore, we get a faithful semiring action of the fusion semiring $\Ring$ of $\A$ on $K_0(\KK)$, 
via restriction of endomorphisms of $C^*_{\locn}(\A)$ to $\KK$ and subsequent pushforward to $K_0(\KK)$.
Namely,  as shown in  from \cite[Th.4.4]{CCHW12}, there is an injective semiring homomorphism
\[
\eta:\Ring\ra \End(K_0(\KK)) 
\]
satisfying $\eta_{[\rho]} = (\hat{\rho}|_{\KK})_*$ for every 
localized covariant endomorphism $\rho$ of $C^*(\A)$, where $[\rho]\in\Ring$ is the sector (\ie the unitary equivalence class) represented by $\rho$, and $\hat\rho$ is a normal *-homomorphism of $C^*_{\locn}(\A)$ naturally associated to $\rho$, mapping $\KK$ into itself. Hereafter, we shall use ``$[\rho]$`` exclusively to denote the sector determined by $\rho$ and not its KK-class.
 
We shall give an interpretation of this action in terms of KK-theory. This fact can be seen as an illustration in the conformal nets setting of  a comment of Izumi \cite[page 118]{Izumi02},  based on previous ideas of Kajiwara and Watatani \cite{KW00,Wat90}, on the relation between sector theory and  KK-theory. There they work out the most natural way of associating a KK-class to a *-homomorphism between two simple C*-algebras. In our context, however, it is a priori unclear what are the correct C*-algebras to be considered and whether we can get rid of their assumptions. In \cite{EG,EG2}, Evans and Gannon discuss KK-theoretical aspects of CFT in relation to modular invariants and twisted K-theory. Following some of their ideas, we shall provide a KK-theoretical interpretation of modular invariants in our setting towards the end of the present article.

Another instance of the emergence of KK-theoretical concepts in AQFT has been recently pointed out by Conti and Morsella \cite{CM12}, where the DHR sectors of scaling limit nets (as defined by Buchholz-Verch, in order to cast the renormalization group analysis into operator algebraic terms) 
are described by maps of the original global quasi-local C*-algebra on four-dimensional Minkowski spacetime  that are suitable modifications of the asymptotic morphisms of Connes and Higson.
 
We close this introduction by mentioning that
other relationships of seemingly quite different nature between KK-theory and quantum field theory have been investigated by other authors, see e.g. \cite{BMRS08}.

\section{Basics of KK-theory}
 
It is well-known that the push-forward of *-homomorphisms gives rise to elements in KK-theory. 
We want to see how this works in our setting in relation to the semiring action $\eta$ recalled above. 
For the convenience of the reader and in order to fix the notation let us introduce here KK-theory very briefly, while referring to \cite{Bl} and \cite{JTh} for details. 
All C*-algebras we consider below will be separable and stably unital. 
In particular, the various approaches to KK-theory become all equivalent \cite[Sec.17\&18]{Bl} and $K_0(A)$ is the Grothendieck group of the projection semigroup of the C*-algebra $\K\otimes A$.
Moreover, we will restrict ourselves to the ungraded algebra case.
 
A \emph{Kasparov $(A,B)$-module} is a tuple $(\E,\phi,F)$, where $\E$ is a countably generated graded Hilbert $B$-module, $\phi:A\ra B(\E)$ is a graded *-homomorphism, and $F\in B(\E)$ has degree one, such that
\[
  (F-F^*)\phi(a), \quad (F^2-\unit) \phi(a), \quad [F,\phi(a)]
\]
lie all in $K(\E)$, the compact operators on $\E$, for all $a\in A$. 
With the usual concept of homotopy, one defines $KK(A,B)$ as the set of homotopy equivalence classes of Kasparov $(A,B)$-modules. Moreover, if we have two Kasparov $(A,B)$-modules $(\E_i,\phi_i,F_i)$ and a unitary in $B(\E_1,\E_2)$ intertwining the $\phi_i$ and the $F_i$, then the two Kasparov modules are homotopically equivalent. There is a direct sum for Kasparov $(A,B)$-modules, which passes to the quotient $KK(A,B)$; \cf \cite[Sec.17]{Bl} for all these statements.
 
For our immediate purposes, the most relevant facts about KK-theory can be summarized as follows:
\begin{theorem}
\label{th:KK-main}
Let $A,B,C$ be C*-algebras as above.
\begin{itemize}
\item[$(1)$] $KK(A,B)$ is an abelian group with respect to the above addition, and $KK$ is a bifunctor from the category of C*-algebras to abelian groups \cite[17.3\&17.8]{Bl}.
\item[$(2)$] There is a canonical identification of $KK(\C,A)$ with $K_0(A)$ (as additive groups), \cite[17.5.5]{Bl}. 
\item[$(3)$] Every *-homomorphism $\phi:A\ra B$ naturally defines a $KK(A,B)$-element $\{\phi\}$
as the homotopy class of $(B,\phi,0)$, where we have identified $B(B)$ with 
the multiplier algebra $\M(B)$.
\item[$(4)$] If two *-homomorphisms $\phi,\psi:A\ra B$, are unitarily equivalent in  
$\M(B)$, then the induced $KK(A,B)$-elements coincide,
\cite[Sec.1.3]{JTh}.
\item[$(5)$] There exists a bilinear map $\times$, the so-called Kasparov product 
\[
  KK(A,B)\times KK(B,C) \ra KK(A,C),
\]
which is associative, \cite[Sec.18]{Bl}.
\item[$(6)$] If $\psi : A \to B$ and $\phi : B \to C$ are *-homomorphisms then 
$$\{\psi\} \times \{\phi\} = \{\phi\circ\psi\}. $$
If  $\;\id_A$  is the identity automorphism of $A$ then $\{\id_A\}$ is the neutral element in $KK(A,A)$ for the Kasparov product.
Hence $KK(A,A)$ is a unital ring. 
\item[$(7)$] The adjoint of the pairing $KK(\C,A) \times KK(A,B) \to KK(\C,B)$ defines,
  via the identification in (2), a map
  $\gamma: KK(A,B) \to {\rm Hom}(K_0(A),K_0(B))$, namely
  \[
  \gamma(y)(x) = x\times y,\quad  x\in K_0(A), y\in KK(A,B).
  \]
  For $y=\{\phi\}$ with $\phi:A\ra B$ a *-homomorphism, we have
  \[
  \gamma(\{\phi\})=\phi_*,
  \]
  the push-forward of $\phi$ in $K$-theory.
\end{itemize}
\end{theorem}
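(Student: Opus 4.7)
The theorem is a compilation of standard results from Kasparov's KK-theory, so the plan is not to reprove the foundations but to verify, item by item, that each statement is an immediate consequence of results already available in \cite{Bl} and \cite{JTh} under the standing hypothesis that all C*-algebras are separable and stably unital. Items (1) and (2) I would simply quote from \cite[17.3, 17.8, 17.5.5]{Bl}.

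For item (3), the main thing to check is that $(B,\phi,0)$, with $B$ viewed as the standard Hilbert $B$-module (trivially graded, since we restrict to the ungraded setting), defines a legitimate Kasparov $(A,B)$-module. The algebra of adjointable operators on $B$ is canonically identified with the multiplier algebra $\M(B)$, so $\phi:A\to \M(B)=B(B)$ makes sense; and for $F=0$ the three conditions $(F-F^*)\phi(a),\; (F^2-\unit)\phi(a),\; [F,\phi(a)]\in \K(B)$ collapse to $-\phi(a)\in \K(B)=B$, which holds. For item (4), given a unitary $u\in \M(B)=B(B)$ intertwining $\phi_1$ and $\phi_2$, $u$ trivially intertwines the zero operators as well, so the resulting Kasparov modules are unitarily equivalent in the sense of the paragraph immediately preceding the theorem and therefore homotopic, which gives the claim.

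Items (5) and (6) are the genuinely deep content — the existence, bilinearity, and associativity of Kasparov's intersection product, together with its compatibility with composition of *-homomorphisms and the identity class. These are the main obstacle in the sense that their full proofs (in \cite[Sec.18]{Bl}) are substantial, but in our compilation they are simply imported.

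Finally, item (7) is a direct consequence of the previous items. The map $\gamma$ is well-defined and a group homomorphism by bilinearity of $\times$ in the second variable together with the identification from (2). For the formula $\gamma(\{\phi\})=\phi_*$, given $x\in K_0(A)=KK(\C,A)$ I would represent $x$ (after a possible stabilization) in the form $\{\psi\}$ arising from a *-homomorphism $\psi$ with source $\C$, and then apply item (6) to get $x\times\{\phi\}=\{\phi\circ\psi\}$; by the identification of (2) this is precisely $\phi_*(x)$. The only genuine care needed is to align the grading and sign conventions between \cite{Bl} and \cite{JTh}, which for ungraded, separable, stably unital algebras is routine.
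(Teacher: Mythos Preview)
Your proposal is correct and matches the paper's approach: the paper likewise treats this theorem as a compilation of standard facts from \cite{Bl} and \cite{JTh}, offering only brief remarks rather than a formal proof (specifically, it notes that (6) follows from the functoriality in \cite[18.7.1]{Bl} via $\{\phi\}=\phi^*\{\id_C\}$, spells out the explicit isomorphism in (2), and derives (7) from (2) together with \cite[17.8.2]{Bl}). Your derivation of (7) by reducing to generators and invoking (6) is a harmless variant of the paper's functoriality argument; just be explicit that a general $x\in K_0(A)$ is a \emph{difference} of classes of the form $\{\psi\}$, so that additivity of $\gamma$ is what lets you pass from generators to all of $K_0(A)$.
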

 
We only mention that property (6) may be obtained as a consequence of the functoriality properties of the KK-product \cite[18.7.1]{Bl} and the relation $\{\phi\} =\phi^*\{\id_C\}$.
The canonical identification in (2) is given by
\[
[p_+]-[p_-] \in K_0(A) \mapsto \Big[ \H_A\oplus \H_A, \phi_{p_+} \oplus \phi_{p_-}, 
\begin{pmatrix} 0 & \unit \\ \unit & 0 \end{pmatrix}\Big] \in KK(\C,A),
\]
where $\H_A$ is the Hilbert module $A\otimes l^2(\Z)$, the grading on $\H_A\oplus\H_A$ is $\unit\oplus-\unit$, $p_\pm \in A\otimes \K$ are projections, and $\phi_{p_\pm}$ is the map: $t\in \C\mapsto t p_\pm \in \M(A\otimes\K)$. It has, in particular, the property that push-forwards in $K_0(A)$ and $KK(\C,A)$ mutually correspond to each other. All this can be easiest seen as outlined in  \cite[Sec.1]{Cu} following Cuntz' quasi-homomorphism picture; alternatively one may use \cite[17.5.5]{Bl}. Finally, property (7) is a consequence of (2) and the functoriality property \cite[17.8.2]{Bl}.

\section{Main result}
 
Let us return to our setting, using the notation from the theorem and Section 4 
in \cite{CCHW12}.
 
\begin{theorem}\label{th:KK-sectors} Let $\A$ be a completely rational net on $S^1$, 
and let $\eta:\Ring\ra \End(K_0(\KK)) $ be the injective semiring homomorphism recalled above. 
Then there is an injective unital semiring homomorphism $j:\Ring \ra KK(\KK,\KK)$ such that
\begin{equation}\label{rel}
x \times j([\rho]) = \eta_{[\rho]}(x), \quad [\rho]\in\Ring, \ x\in K_0(\KK)=KK(\C,\KK).
\end{equation}
\end{theorem}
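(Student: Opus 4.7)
The natural candidate is
\[
j([\rho]) := \{\hat\rho|_\KK\} \in KK(\KK,\KK),
\]
the class attached by Theorem \ref{th:KK-main}(3) to the *-endomorphism $\hat\rho|_\KK$ of $\KK$. Since $C^*_{\locn}(\A)$ is precisely the multiplier algebra $\M(\KK)$ of $\KK\cong\K^{\oplus n}$, the naturality of $\rho\mapsto\hat\rho$ recalled from \cite{CCHW12} ensures that unitarily equivalent DHR endomorphisms $\rho,\rho'$ give rise to $\hat\rho$ and $\hat{\rho'}$ that are conjugate by a unitary of $\M(\KK)$; hence their restrictions to $\KK$ define the same class by Theorem \ref{th:KK-main}(4), so $j$ descends to sectors. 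The identity (\ref{rel}) is then immediate from Theorem \ref{th:KK-main}(7):
\[
x \times j([\rho]) = \gamma(j([\rho]))(x) = (\hat\rho|_\KK)_*(x) = \eta_{[\rho]}(x),\quad x\in K_0(\KK)=KK(\C,\KK),
\]
and the injectivity of $j$ follows at once from that of $\eta$.

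It remains to check that $j$ is a unital semiring homomorphism. Unitality is clear, since $j([\id])=\{\id_\KK\}$ is the unit of the ring $KK(\KK,\KK)$ by Theorem \ref{th:KK-main}(6). For additivity, the sum $[\rho]+[\sigma]$ is realized by $\tau := V\rho(\cdot)V^* + W\sigma(\cdot)W^*$ for any isometries $V,W\in C^*_{\locn}(\A)=\M(\KK)$ with $VV^*+WW^*=\unit$, which exist thanks to the explicit description $\KK\cong\K^{\oplus n}$. Conjugating by the Hilbert-module unitary $T:\KK\oplus\KK\to\KK$, $T(b_1,b_2)=Vb_1+Wb_2$, identifies the Kasparov module $(\KK,\hat\tau|_\KK,0)$ with $(\KK\oplus\KK,\hat\rho|_\KK\oplus\hat\sigma|_\KK,0)$, whose KK-class is $\{\hat\rho|_\KK\}+\{\hat\sigma|_\KK\}$ by the very definition of the sum in $KK(\KK,\KK)$. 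For multiplicativity, Theorem \ref{th:KK-main}(6) yields
\[
j([\rho]) \times j([\sigma]) = \{\hat\sigma|_\KK \circ \hat\rho|_\KK\} = \{\widehat{\sigma\rho}|_\KK\} = j([\sigma\rho]) = j([\rho\sigma]) = j([\rho][\sigma]),
\]
where the penultimate equality uses the commutativity of the fusion semiring $\Ring$, which in turn is a consequence of the modularity of the DHR tensor category of a completely rational net.

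The main conceptual point lies in this last step: by Theorem \ref{th:KK-main}(6), the assignment $\phi\mapsto\{\phi\}$ from $\End(\KK)$ to $KK(\KK,\KK)$ is intrinsically \emph{anti}-multiplicative with respect to composition and the Kasparov product, and it is precisely the commutativity of $\Ring$ that reconciles this with the covariance of $\eta$ and makes $j$ a genuine ring homomorphism. The remaining technical input is the availability of the implementing isometries $V,W$ inside $\M(\KK)$, which follows from the explicit structure $\KK\cong \K^{\oplus n}$.
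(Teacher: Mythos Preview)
Your proof is correct and follows essentially the same route as the paper: the same definition $j([\rho])=\{\hat\rho|_{\KK}\}$, well-definedness via $C^*_{\locn}(\A)=\M(\KK)$ and Theorem~\ref{th:KK-main}(4), multiplicativity via Theorem~\ref{th:KK-main}(6) combined with commutativity of $\Ring$, additivity via the Hilbert-module unitary $T(b_1,b_2)=Vb_1+Wb_2$ implementing $\KK\oplus\KK\simeq\KK$, and finally (\ref{rel}) and injectivity from Theorem~\ref{th:KK-main}(7) and the injectivity of $\eta$. The only small difference is bookkeeping in the additivity step: the paper takes the Cuntz isometries $s_1,s_2$ inside $C^*(\A)$ (so that $\tau=s_1\rho(\cdot)s_1^*+s_2\sigma(\cdot)s_2^*$ is genuinely a DHR endomorphism representing $[\rho]+[\sigma]$) and then pushes them to $\hat s_i\in\M(\KK)$, whereas you produce $V,W$ directly in $\M(\KK)$ from $\KK\cong\K^{\oplus n}$; your formulation is slightly loose in that $\tau$ as written is not literally an endomorphism of $C^*(\A)$, but since any two pairs of Cuntz isometries in $\M(\KK)$ are unitarily conjugate there, the resulting KK-class is the same and the argument goes through.
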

 
\begin{proof}
Using Theorem \ref{th:KK-main}(1),
define 
\[
j([\rho]) := \{\hat{\rho} |_{\KK}\} \in KK(\KK,\KK), [\rho] \in \Ring \ .
\] 
This map is well-defined since if $\rho$ and $\rho'$ are equivalent endomorphisms in $C^*(\A)$ then $\hat{\rho}$ and $\hat{\rho'}$ are equivalent endomorphisms of $C^*_{\locn}(\A)$. As the latter contains $\KK$ as an ideal and it is weakly closed in $B(\H_{\locn})$, it can be easily identified with the multiplier algebra $\M(\KK)$. Then by Theorem \ref{th:KK-main}(2), $\hat{\rho} |_{\KK}$
and $\hat{\rho'} |_{\KK}$ give rise to Kasparov $(\KK,\KK)$-modules, which according to (3) are equivalent and so define the same KK-element $\{\hat{\rho} |_{\KK}\}$.
 
Furthermore, $j$ is multiplicative, namely $j([\rho_1][\rho_2]) = j([\rho_1]) \times j([\rho_2])$ 
and $j([\id])$ is the unit in $KK(\KK,\KK)$. Indeed, using the commutativity of the composition of DHR sectors 
(due to the existence of a unitary braiding) and  point (6) in Theorem \ref{th:KK-main},
\begin{align*}
j([\rho_1][\rho_2]) & = j([\rho_1 \circ \rho_2]) 
=  j([\rho_2 \circ \rho_1])
   =  \{\widehat{\rho_2 \circ \rho_1} |_{\KK}\} \\
& = \{\hat{\rho_2}|_{\KK} \circ \hat{\rho_1} |_{\KK}\} 
= \{\hat{\rho_1} |_{\KK}\} \times  \{\hat{\rho_2} |_{\KK}\} =  j([\rho_1]) \times j([\rho_2]) 
\end{align*}
 
Next, we check the additivity of $j$, namely $j([\rho_1] \oplus [\rho_2]) = j([\rho_1]) + j([\rho_2])$.
If $s_1$ and $s_2$ are a pair isometries generating a copy of $O_2$ inside $C^*(\A)$  then a representative of 
$[\rho_1] \oplus [\rho_2]\in\Ring$ is the endomorphism $\rho=s_1 \rho_1(\cdot)s_1^* + s_2 \rho_2(\cdot)s_2^*$.
Therefore, using the fact that $\hat\rho(x) =  \hat{s}_1 \hat{\rho_1}(x) \hat{s}_1^* + \hat{s}_2 \hat{\rho_2}(x) \hat{s}_2^*$, $x \in C^*_{\locn}(\A)$, where $\hat{s_i} = \pi_{\locn}(s_i)$, $ i=1,2$, one has
\[
j([\rho_1] \oplus [\rho_2]) = \{(s_1 \rho_1(\cdot)s_1^* + s_2 \rho_2(\cdot)s_2^*)\hat{}|_{\KK}\}
= \{\hat{s}_1 \hat{\rho_1}|_{\KK} \hat{s}_1^* + \hat{s}_2 \hat{\rho_2}|_{\KK} \hat{s}_2^*\}.
\]
The subsequent Lemma \ref{lemma} and its proof (with $A=C^*_{\locn}(\A)$ and $I=\KK$) imply
\[
  T(\KK\oplus\KK)=\KK, \quad 
T(\hat{\rho_1}|_{\KK}\oplus \hat{\rho_2}|_{\KK}) T^* =\hat{s}_1 \hat{\rho_1}|_{\KK} \hat{s}_1^* + \hat{s}_2 \hat{\rho_2}|_{\KK} \hat{s}_2, 
\]
so that, according to Theorem \ref{th:KK-main}(3) and the introduction of Kasparov modules, the classes of $(\KK,\hat{\rho_1}|_{\KK},0)\oplus(\KK,\hat{\rho_1}|_{\KK},0)$ and 
$(\KK,\hat{s}_1 \hat{\rho_1}|_{\KK} \hat{s}_1^* + \hat{s}_2 \hat{\rho_2}|_{\KK} \hat{s}_2^*,0)$ in $KK(\KK,\KK)$ coincide, thus
\[
\{\hat{s}_1 \hat{\rho_1}|_{\KK} \hat{s}_1^* + \hat{s}_2 \hat{\rho_2}|_{\KK} \hat{s}_2^*\}
= \{ \hat{\rho_1}|_{\KK}\} + \{\hat{\rho_2}|_{\KK}\} = j([\rho_1]) + j([\rho_2]).
\]
Finally, the identity (\ref{rel}) follows at once from point (7) in 
Theorem \ref{th:KK-main}. 
Since $\eta$ is injective according to \cite[Theorem 4.4]{CCHW12},
  $j$ has to be injective, too.
\end{proof}
 
\begin{lemma}\label{lemma}
Let $A$ be a unital  C*-algebra containing a copy of the Cuntz algebra $O_2$ and let $I$ be a closed two-sided ideal in $A$. Then $I\oplus I \simeq I$ as right Hilbert $I$-modules.
\end{lemma}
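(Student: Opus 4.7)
The plan is to exhibit an explicit unitary $T \colon I \oplus I \to I$ of right Hilbert $I$-modules built directly from the Cuntz generators. Fix two isometries $s_1, s_2 \in O_2 \subset A$ satisfying the Cuntz relations $s_i^* s_j = \delta_{ij} \unit$ and $s_1 s_1^* + s_2 s_2^* = \unit$, and define
\[
T(x_1, x_2) := s_1 x_1 + s_2 x_2.
\]
This takes values in $I$ because $I$ is a two-sided ideal and $s_i \in A$, so $s_i x_i \in I$.

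The proof then reduces to three routine checks. First, $T$ is right $I$-linear: this is immediate since the $s_i$ multiply on the left while the module action of $I$ on itself is on the right. Second, $T$ preserves the canonical $I$-valued inner product; expanding
\[
(s_1 x_1 + s_2 x_2)^*(s_1 y_1 + s_2 y_2) = \sum_{i,j=1}^{2} x_i^* s_i^* s_j y_j,
\]
the orthogonality relations $s_i^* s_j = \delta_{ij} \unit$ collapse this to $x_1^* y_1 + x_2^* y_2 = \langle (x_1,x_2), (y_1,y_2) \rangle$. Third, one exhibits the adjoint/inverse $S(x) := (s_1^* x, s_2^* x)$, which lands in $I \oplus I$ because $I$ is also a \emph{left} ideal. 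The relation $s_1 s_1^* + s_2 s_2^* = \unit$ yields $T S = \id_I$, while the orthogonality relations yield $S T = \id_{I \oplus I}$. Hence $T$ is a unitary Hilbert $I$-module isomorphism with $T^* = S$.

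There is essentially no obstacle here: the statement is a prototypical instance of how $O_2$ absorbs a factor of $2$ at the level of multiplicities, and the verifications are simply a bookkeeping of the Cuntz relations. The only point worth flagging is the interpretation of the hypothesis: for $T$ to be surjective one needs $s_1 s_1^* + s_2 s_2^* = \unit_A$, i.e., that the copy of $O_2$ sits \emph{unitally} inside $A$; this is automatic in the application to $A = C^*_{\locn}(\A)$ used in Theorem \ref{th:KK-sectors}, where the $s_i$ are isometries in the ambient unital algebra. Once $T$ is in hand, the identities $T(\KK \oplus \KK) = \KK$ and $T(\hat{\rho_1}|_{\KK} \oplus \hat{\rho_2}|_{\KK}) T^* = \hat s_1 \hat\rho_1|_{\KK} \hat s_1^* + \hat s_2 \hat\rho_2|_{\KK} \hat s_2^*$ required in the additivity step of Theorem \ref{th:KK-sectors} both drop out by direct substitution of the formulas for $T$ and $T^*$.
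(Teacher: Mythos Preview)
Your proof is correct and follows essentially the same approach as the paper: both define the map $T(a,b)=s_1 a + s_2 b$ from the Cuntz generators and verify that it is a unitary right $I$-module map. The paper simply states that $T$ is isometric and surjective and cites \cite[3.2]{Lan}, whereas you spell out the inner-product computation and the explicit inverse $S(x)=(s_1^*x,s_2^*x)$; your remark that the embedding $O_2\hookrightarrow A$ must be unital for surjectivity is a point the paper leaves implicit.
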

 
\begin{proof}
Let $s_1,s_2\in A$ be a pair of isometries such that $s_1 s_1^* + s_2 s_2^* = 1$. Define the linear operator $T:I\oplus I\ra I$ by $T(a,b):= s_1 a + s_2 b$, $a,b\in I$. It is easy to see that $T$ is an isometric and surjective right $I$-module map, so that it is indeed unitary, \cite[3.2]{Lan}.
\end{proof}

\section{Some remarks}
 
First of all, Theorem \ref{th:KK-sectors} says that the left action of $\Ring$ on $K_0(\KK)$ ``factorizes" through a right action of $KK(\KK,\KK)$ and can be expressed in terms of a Kasparov product. This construction appears to depend on various special properties of the algebra $\KK$ which has been defined only for completely rational conformal nets. It is not evident how to 
generalize the definition of $\KK$  in order to find the analogues of \cite[Th.4.4]{CCHW12} and
Theorem \ref{th:KK-sectors} for a more general class of conformal nets.

Second, write $\RRing$ for the Grothendieck ring generated by the semiring $\Ring$ and  $\tilde{j}$ for the unique ring homomorphism extending $j$ to $\RRing$. It is clearly injective, owing to the preceding theorem. We would like to stress then that $\tilde{j}(\RRing)\subset KK(\KK,\KK)$ is always a \emph{proper subring} if the number of sectors $n$ is greater than $1$, \ie for non-holomorphic theories. This can be easily seen because 
$\RRing\simeq K_0(\KK)$ is commutative while $KK(\KK,\KK)$ is non-commutative. To prove the latter statement, we simply the universal coefficient theorem  \cite[V.1.5.8]{Bla06}, which holds for the algebra $\KK$ since it lies in the so-called bootstrap class  \cite[V.1.5.4]{Bla06}, and which says
\begin{equation}\label{eq:KK-Kunneth}
KK(\KK,\KK) \simeq \End(K_0(\KK))\simeq \Mat_n(\Z).
\end{equation}
 
Our last comment concerns \emph{modular invariants}. In \cite[Sec.7]{EG} it has been pointed out that modular invariants can be regarded as certain KK-classes. As we shall see now, an interpretation of modular invariants as KK-classes can be directly formulated in our setting. 
For a completely rational conformal net the braiding is always non-degenerate
\cite[Cor.37]{KLM}. Then by Rehren's construction there are invertible complex vector space  endomorphisms 
$S,  T \in \End(\C \otimes_\Z \RRing)$  giving rise to a representation of the modular group $\operatorname{SL}(2,\Z)$ on the fusion algebra $\C \otimes_\Z \RRing$, \cf \cite{Reh}.
A modular invariant is a $\Z$-module endomorphisms $Z \in \End_\Z(\RRing)$ such that: 
\begin{itemize}
\item[-] $\id \otimes_{\Z} Z \in  \End\left(\C \otimes_\Z \RRing \right)$ commutes with $S$ and $T$ (modular invariance); 
\item[-] $Z(\Ring) \subset \Ring$ (positivity);
\item[-] the matrix element $Z_{1 1}$ in the distinguished basis of sectors is $1$ 
(uniqueness of the vacuum).
\end{itemize}
Hence, as a consequence of the additive group isomorphism $\RRing \simeq K_0(\KK)$, every modular invariant $Z$ can be identified with an element  in $\End(\K_0(\KK)) \simeq KK(\KK,\KK)$. 
 
\vspace{
\baselineskip}
 
{\small
 
\noindent\textbf{Acknowledgements.}
We would like to thank Roberto Longo for stimulating comments on the relationship between the the representation theory of conformal nets and KK-theory. We also thank Martin Grensing for a very useful discussion on Kasparov products and for drawing to our attention some references.

}
 
\end{document}